\newtheorem{theorem}{Theorem}
\newtheorem{lemma}[theorem]{Lemma}
\newtheorem{corollary}[theorem]{Corollary}
\theoremstyle{definition}
\title{Alternator Coins}
\author{Benjamin Chen, Ezra Erives, Leon Fan,\\
Michael Gerovitch, Jonathan Hsu, Tanya Khovanova,\\
Neil Malur, Ashwin Padaki, Nastia Polina,\\
Will Sun, Jacob Tan, Andrew The}
\date{}
\begin{document}

\maketitle

\begin{abstract}
We introduce a new type of coin: \textit{the alternator}. The alternator can pretend to be either a real or a fake coin (which is lighter than a real one). Each time it is put on a balance scale it switches between pretending to be either a real coin or a fake one. 

In this paper, we solve the following problem:
You are given $N$ coins that look identical, but one of them is the alternator. All real coins weigh the same. You have a balance scale which you can use to find the alternator. What is the smallest number of weighings that guarantees that you will find the alternator?
\end{abstract}

\section{Introduction}

Mathematicians have been fascinated with coin puzzles for a long time. The simplest coin puzzle is formulated like this:

\begin{quote}
You are given $N$ coins that look identical, but one of them is fake and is lighter than the other coins. All real coins weigh the same. You have a balance scale that you can use to find the fake coin. What is the smallest number of weighings that guarantees finding the fake coin?
\end{quote}

The above puzzle first appeared in 1945. Since then there have been many generalizations of this puzzle \cite{GN}. A new generalization that inspired this paper appeared in 2015 \cite{KKP}. This generalization introduces a new type of coin, called a \textit{chameleon coin}, which can mimic a fake coin or a real coin. The chameleon coin has a mind of its own and can choose how to behave at any weighing. It is impossible to find chameleon coins among real coins as the chameleons can pretend to be real all the time. An interesting question to ask is: given that a mix of $N$ identical coins contains one chameleon and one fake coin, find two coins one of which is guaranteed to be a fake \cite{KKP}. 

We can draw a parallel between coin puzzles and logic puzzles. Real coins are similar to truth-tellers, and fake coins are similar to liars. Many logic puzzles include \textit{normal} people: people who sometimes tell the truth and sometimes lie. Thus a chameleon coin is an analogue of a normal person. In addition to normal people, some logic puzzles have \textit{alternators}: people who alternate between telling the truth and lying. In logic puzzles if you are talking to one normal person s/he can behave consistently as a truth-teller or a liar, and it is impossible to find out who this person is. It is different with the alternating person. To identify them, you can just ask them how much is two plus two---twice.

Coming back to coins. As chameleon coins are analogues of normal people in logic puzzles, it would be natural to introduce the analogues of the alternators to coin puzzles. It would be logical to call such a coin \textit{the alternator}. The alternator can mimic a fake coin or a real coin. But there is a deterministic rule. The alternator switches the behavior each time it is on the scale. Unlike the chameleon, the alternator coin can always be found.

In this paper we solve the question of finding one alternator among $N$ coins using the balance scale in the minimum number of weighings. In Section~\ref{sec:alternator} we pose the problem, provide notation, small examples, and trivial bounds. In Section~\ref{sec:bound} we provide an information-theoretic argument for a stronger lower bound. In Section~\ref{sec:strategy} we produce a strategy that matches the lower bound, thus solving the problem.

This project was researched at the PRIMES STEP program, which is a younger branch of the MIT PRIMES program. The goal of the PRIMES program is to help gifted high-schoolers conduct research in mathematics. The PRIMES program started in 2011 and has been extremely successful \cite{EGK}. In 2015 the PRIMES staff decided to start a new program, PRIMES STEP, for middle schoolers. The goal is to train gifted students in middle school for math competitions, to teach them to think mathematically, and to do research with them. In the fall of 2015, the students worked on a logic project, and together with their mentor Tanya Khovanova wrote a paper, \textit{Who is Guilty?} \cite{guilty}. 

The Alternator Coin project was conducted at PRIMES STEP in the spring of 2016. The leader of the project was Tanya Khovanova. Her co-authors were in seventh or eighth grade at that time.

\section{The Alternator Coin}\label{sec:alternator}

We are given $N$ identically looking coins. All but one coin are real and weigh the same. One coin is special and is called \textit{the alternator coin}. It alternatively mimics a real coin and a fake coin. That means, when the alternator is put on a balance scale it either weights the same as a real coin or is lighter. The alternator is similar to the \textit{chameleon coin} first defined in \cite{KKP}. The chameleon coin can choose randomly and independently how to behave. Unlike the chameleon coin, the alternator is more deterministic. It switches its behavior each time it is put on the scale. As usual in coin puzzles, we have a balance scale and we need to find the alternator. More precisely, we need to find the smallest number of weighings that guarantees finding the alternator coin.

We denote the smallest number of weighings as $a(N)$.

In addition to $a(N)$ we study two more sequences. We can simplify our problem by assuming that the status of the alternator coin is known in advance. We call it the \textit{deterministic alternator}. Sequence $f(N)$ is the smallest number of weighings that guarantees to find the alternator among $N$ coins if the alternator starts as fake. Sequence $r(N)$ is the smallest number of weighings that guarantees to find the alternator among $N$ coins if the alternator starts as real.

We also want to introduce the \text{state} of the alternator. We say that the alternator is in $f$-state if the next time it will be on the scale it will behave as a fake coin. We say that the alternator is in $r$-state if the next time it will be on the scale it will behave as a real coin. That is to say, for calculating $f(N)$, correspondingly $r(N)$, we assume that the alternator starts in the $f$, correspondingly $r$, state. For calculating $a(N)$ we do not know the starting state of the alternator. We call it the $a$-state.

Note that after the alternator is found, even if it starts in the $a$-state, we can calculate its state during each weighing retroactively. There is one exception. It is possible to deduce that the alternator is the coin that was never on the scale. In this case, if the alternator starts in the $a$-state, we would still not know its state, when we find it.

\subsection{One fake coin}

Here we remind the readers the standard solution to the puzzle with one fake coin. We will use similar ideas with the alternator coin later.

Suppose there is a strategy that finds a fake coin in $w$ weighings. Suppose coin number $i$ is fake. Then there is a sequence of weighings after which we determine that the $i$-th coin is indeed the fake coin. The output of each weighing is one of three types:

\begin{itemize}
\item E---when the pans are equal weights.
\item L---when the left pan is lighter.
\item R---when the right pan is lighter.
\end{itemize}

We can represent the sequence of weighings that results in our conclusion that the $i$-th coin is fake as a string of three letters: E, L, and R. Obviously, the same string cannot correspond to two different coins. That means that the number of coins that can be processed in $w$ weighings is not more than $3^w$.

On the other hand, it is easy to produce a strategy that finds the fake coin out of $N$ coins in $\lceil \log_3 N \rceil$ weighings. For example, if the number of coins is $3^w$, we can divide all the coins into three parts with $3^{w-1}$ coins in each. We put two parts on the scale and if the scale unbalances, then the fake coin is on the lighter pan. If the scale balances, then the fake coin is in the pile that was not on the scale. This way with each weighing we make the pile containing the fake coin three times smaller. Using this algorithm we can find the fake coin in $w$ weighings. If the total number of coins is not a power of three, the same idea works. We will leave the details to the reader.

\subsection{Trivial bounds}

The alternator is trickier than just a regular fake coin, so we expect to use more weighings. Also the $a$ state provides us less information then the $f$ and $r$ states. That means, $a(N) \geq r(N)$ and $a(N) \geq f(N)$. In the following lemma, the solution for one fake coin allows us to set trivial lower and upper bounds for the alternator coin.

\begin{lemma} If the total number $N$ of coins is in the range: $3^{k-1} < N \leq 3^k$, then the bounds for the $a$ and $r$ starting states are: 
$k + 1 \leq a(N),\ r(N) \leq 2k.$ The bounds for the $f$ starting state are: $k \leq f(N) \leq 2k - 1.$
\end{lemma}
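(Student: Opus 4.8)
The plan is to split the argument into lower and upper bounds, and to reduce the three starting states to fewer cases using the relations $a(N) \ge r(N)$ and $a(N) \ge f(N)$ noted just above: any strategy that succeeds without knowing the starting state in particular succeeds when the start is $r$ or $f$, so it suffices to prove the lower bounds $f(N) \ge k$ and $r(N) \ge k+1$ directly (the latter then also gives $a(N) \ge k+1$), and to exhibit explicit strategies for all the upper bounds.

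For the lower bounds I would reuse the information-theoretic counting from the one-fake-coin case. A strategy is a ternary decision tree (with outcomes $\mathrm{E}$, $\mathrm{L}$, $\mathrm{R}$) of depth $w$, so it has at most $3^w$ leaves; since each of the $N$ possible locations of the alternator must reach a distinct leaf, we need $3^w \ge N > 3^{k-1}$, giving $f(N) \ge k$. The extra $+1$ for the $r$ (hence $a$) state comes from the key observation that when the alternator starts in the $r$-state, the very first weighing is completely determined: the alternator imitates a real coin and every other coin is real, so the outcome depends only on the number of coins on each pan and is independent of which coin is the alternator. Thus the first weighing yields no information, the remaining $w-1$ weighings form a ternary tree that must still separate all $N$ candidates, and $3^{w-1} \ge N > 3^{k-1}$ forces $w \ge k+1$.

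For the upper bounds the central device is a \emph{double weighing}: split the candidates into three nearly equal groups $A$, $B$, $C$, each of size at most $\lceil N/3 \rceil \le 3^{k-1}$, and weigh $A$ against $B$ twice in a row. Because the alternator flips its behavior between the two weighings, it acts as a fake coin in exactly one of them regardless of its current state; hence if it lies in $A$ we observe one $\mathrm{L}$ and one $\mathrm{E}$, if in $B$ one $\mathrm{R}$ and one $\mathrm{E}$, and if in $C$ (off the scale both times) two $\mathrm{E}$s. This localizes the alternator to a group of size at most $3^{k-1}$ using two weighings \emph{irrespective of the unknown starting state}, and moreover the alternator is flipped either twice or not at all, so it is returned to the state it had before the double weighing. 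Iterating this $k$ times gives $a(N), r(N) \le 2k$. For the $f$-state one weighing can be saved: since the alternator is fake on its first appearance, an ordinary single ternary weighing of $A$ against $B$ already localizes it, after which we finish with double weighings, and a short induction yields $f(N) \le 2k-1$.

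The step I expect to be the main obstacle is the $a$-state upper bound, precisely because the unknown starting state makes a single weighing uninterpretable: a balanced pan is consistent both with the alternator being off the scale and with its being present but currently acting real. The double-weighing trick is what resolves this, and the point to verify most carefully is its state-parity property—that two consecutive weighings of the same configuration leave the alternator's state unchanged—so that the iteration stays valid at every level without our ever having to know the state. The remaining care is routine: handling $N$ that is not a power of three through the estimate $\lceil N/3\rceil \le 3^{k-1}$, and checking the small base cases (such as $f(3)=1$ and $r(3)=a(3)=2$) that anchor the inductions.
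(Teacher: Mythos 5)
Your proposal is correct and takes essentially the same approach as the paper: the $3^w$ information-theoretic count plus the observation that a first weighing with the alternator in the $r$-state is uninformative yields the lower bounds (with $a(N)\ge r(N)$ handling the $a$-case), and the repeat-every-weighing-twice strategy, saving the repetition of the first weighing in the $f$-case, yields the upper bounds. The extra details you supply (the state-parity check and the $\lceil N/3\rceil \le 3^{k-1}$ bookkeeping) are correct refinements of the paper's terser argument.
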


\begin{proof}
We proceed with the lower bound. The same information-theoretic argument as we presented above works for the $f$-state. In addition to that, if the alternator starts in the $r$-state, then the first weighing will balance. It will not provide any information for any coin; it will just change the state of the coins that are on the scale. As the $a$-state is not better than the $r$-state, the same lower bound works for the $a$-state too.

The upper bound is due to the following strategy. Do the same thing as if looking for the fake coin, but perform every weighing twice. If the alternator participates in two weighings in a row, it has to act as fake in one of the weighings. This way after two weighings we will know which of the three piles contains the alternator. If the alternator starts in the $f$-state, then we do not need to repeat the first weighing twice. 
\end{proof}

For example, this means that $r(2) = r(3)= a(2)=a(3)=2$ and $f(2)=f(3)=1$. Also, if $4 \leq N \leq 9$, then $a(N)$ and $r(N)$ are 3 or 4.

\subsection{Small examples}

For a small number of coins we searched all possible strategies. We present the weighing strategies in a way that will be useful for our induction later. The following properties of our strategies are important to notice:

\begin{itemize}
\item We present the strategies for the $a$ case.
\item The same strategy works for the $r$ case if we ignore the branch when the first weighing unbalances.
\item The same strategy without the first weighing works for the $f$ case.
\item The strategy for $2k+1$ coins is generated from the strategy for $2k$ coins: one coin is put aside from the start and if all the weighings balance, the alternator is that put-aside coin.
\end{itemize}

\subsubsection{Two or three coins}

There is only one possible type of weighing we can do: compare one coin to another coin on the scale. We can find the alternator coin in two weighings by comparing the first and the second coin twice. If one of these coins is the alternator, it will reveal itself. If not, which can only happen if the total number of coins is three not two, then after both weighings balance, we know that the alternator is coin number 3.

We  see that $f(2) = f(3) = 1$ and $r(2) = r(3)= a(2)=a(3)=2$.

\subsubsection{Four or five coins}

The trivial bound shows that we need at least 3 weighings for the $a$ and $r$ case and at least two weighings for the $f$ case. Here we show how to resolve the $a$ case in three weighings. The numbers are the indices of the coins.

Compare coins 1, 2 versus 3, 4. If the weighing is unbalanced, then the lighter pan contains the alternator. After that we can find the alternator in two weighings. If the weighing is balanced, then the second weighing is comparing 1 to 2, and the third weighing is comparing 3 to 4. At this time, if the alternator is among the first four coins, it will reveal itself. If all the weighings balance, then coin 5 is the alternator.

Therefore, $a(4) = r(4) = a(5) = r(5) = 3$ and $f(4) = f(5) =2$.

\subsubsection{Other strategies}

We presented the strategies that will be a part in our induction process. It is possible to have completely different strategies. Here is a different strategy for finding the alternator in three weighings when $N=4$.

Compare 1 to 2 in the first weighing. Compare 2 to 3 in the second weighing. Compare 1 to 3 in the third weighing. If at any point one of the weighings is unbalanced, then the lighter pan contains the alternator. If all the weighings balance, we see that every coin participated in two weighings each. This means that all these coins are real and the fourth coin is the alternator.

As you might notice for the examples we have $a(N) = r(N) = f(N)+1$. Is this always true? To maintain the suspense, let us hold back the answer to this question.

\section{A Better Lower Bound}\label{sec:bound}

To present and prove our new and better bound we first need to introduce the Jacobsthal numbers.

\subsection{Jacobsthal numbers}

The Jacobsthal numbers $J_n$ are defined as $J_n = (2^n - (-1)^n)/3$. This is sequence A001045 in the OEIS \cite{OEISJ}. The Jacobsthal numbers satisfy the following recursion: $J_{n+1} = J_n + 2J_{n-1}$. Indeed, the two geometric series $2^n$ and $(-1)^n$ satisfy this recursion, together with any linear combination of them.

The Jacobsthal numbers are the coolest numbers you never heard about. The OEIS page has a lot of different definitions for the Jacobsthal numbers. 

For example, $J_n$ is the number of ways to tile a 3-by-$(n-1)$ rectangle with 1-by-1 and 2-by-2 square tiles. Also $J_n$ is the number of ways to tile a 2-by-$(n-1)$ rectangle with 1-by-2 dominoes and 2-by-2 squares. We leave it to the reader to prove these properties.

Another property that we also leave to the reader: the product of two successive Jacobsthal numbers is always a triangular number.

But we digress. We do not need these cool properties for our future progress. What we need is the following lemma which is also easy to prove:

\begin{lemma}\label{lem:2j}
$J_n = 2J_{n-1} - (-1)^n$.
\end{lemma}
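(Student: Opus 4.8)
The plan is to prove the identity by direct substitution of the closed-form definition $J_n = (2^n - (-1)^n)/3$, since that formula is already in hand and the claim is purely algebraic. I would start from the right-hand side $2J_{n-1} - (-1)^n$, replace $J_{n-1}$ by $(2^{n-1} - (-1)^{n-1})/3$, and simplify until it collapses to the closed form for $J_n$.

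The one computation that needs care is the parity bookkeeping. After multiplying by $2$ the term $2^{n-1}$ becomes $2^n$, and the crucial step is to rewrite $(-1)^{n-1} = -(-1)^n$ so that every sign is expressed in terms of $(-1)^n$. Doing so gives $2J_{n-1} = (2^n + 2(-1)^n)/3$; subtracting $(-1)^n = 3(-1)^n/3$ then collapses the numerator to $2^n - (-1)^n$, which is exactly $3J_n$, so the right-hand side equals $J_n$. I expect this to be the only place an error could creep in, and there is no genuine obstacle beyond tracking the alternating sign.

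Alternatively, and perhaps more in the spirit of the recursion already displayed, I could avoid the closed form entirely. Reindexing the stated recursion $J_{n+1} = J_n + 2J_{n-1}$ gives $J_n = J_{n-1} + 2J_{n-2}$, and equating this with the target identity $J_n = 2J_{n-1} - (-1)^n$ rearranges to $J_{n-1} = 2J_{n-2} + (-1)^n = 2J_{n-2} - (-1)^{n-1}$, which is exactly the claim at index $n-1$. This reduces the statement at $n$ to the statement at $n-1$ and sets up a clean induction, with the base case checked directly from the first few Jacobsthal values. I would present the direct substitution as the main proof, since it is the shortest, and mention the inductive route only as a remark.
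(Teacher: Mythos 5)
Your proof is correct. The paper itself offers no proof of this lemma (it is stated as ``easy to prove'' and left to the reader), and your direct substitution of the closed form $J_n = (2^n - (-1)^n)/3$, with the sign bookkeeping $(-1)^{n-1} = -(-1)^n$ handled correctly, is exactly the routine verification the paper anticipates; the inductive alternative via the recursion is also sound.
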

 	
We actually do not need this lemma but rather its consequence.

\begin{corollary}\label{cor:Jsplit}
If a number $k$ is between two successive Jacobsthal numbers: $J_n < k \leq J_{n+1}$, then $k = 2J_{n-1} + m$, where $0 \leq m \leq J_n$.
\end{corollary}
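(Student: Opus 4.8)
The plan is to define $m := k - 2J_{n-1}$, so that the identity $k = 2J_{n-1} + m$ holds automatically by construction. Since $k$ and all Jacobsthal numbers are integers, $m$ is an integer as well, and it therefore suffices to verify the two bounds $0 \le m$ and $m \le J_n$ separately.

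For the upper bound I would use the defining recursion $J_{n+1} = J_n + 2J_{n-1}$. This yields $m = k - 2J_{n-1} \le J_{n+1} - 2J_{n-1} = J_n$, where the inequality is precisely the hypothesis $k \le J_{n+1}$. This step is immediate and requires nothing beyond the recursion.

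For the lower bound I would invoke Lemma~\ref{lem:2j}, which gives $J_n = 2J_{n-1} - (-1)^n$, equivalently $2J_{n-1} = J_n + (-1)^n$. Substituting, $m = k - J_n - (-1)^n$. The hypothesis $k > J_n$ together with integrality gives $k - J_n \ge 1$, hence $m \ge 1 - (-1)^n \ge 0$, since $1 - (-1)^n$ is either $0$ (for even $n$) or $2$ (for odd $n$). Combining the two bounds establishes $0 \le m \le J_n$.

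The argument is essentially a two-line computation, so I do not anticipate any serious obstacle. The only point requiring a little care is the lower bound: I must use the integrality of $k$ to promote the strict inequality $k > J_n$ to $k \ge J_n + 1$ before combining it with Lemma~\ref{lem:2j}, so that the parity term $(-1)^n$ is correctly absorbed rather than producing a spurious negative value of $m$.
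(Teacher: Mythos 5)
Your proof is correct and follows essentially the same route the paper intends: the corollary is presented as an immediate consequence of Lemma~\ref{lem:2j}, with the identity $2J_{n-1} = J_n + (-1)^n$ handling the lower bound and the recursion $J_{n+1} = J_n + 2J_{n-1}$ handling the upper bound, exactly as you do. Your explicit use of integrality to promote $k > J_n$ to $k \ge J_n + 1$ is the one detail the paper leaves tacit, and you handle it correctly.
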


The Jacobsthal numbers are very important in this paper. We will see that each of our three sequences increase by 1 right after every $N$ that is a Jacobsthal number.

\subsection{New bound}

We use information theory again to produce a better bound.

\begin{theorem}
The number of coins we can process in $w$ weighings is not more than $J_{w+2}$ if the alternator is in $f$-state. In $r$- or $a$-state the number of coins we can process in $w$ weighings is not more than $J_{w+1}$.
\end{theorem}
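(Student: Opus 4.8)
The plan is to prove a bound for all three starting states at once, by induction on the number of weighings $w$. Write $N_f(w), N_r(w), N_a(w)$ for the largest number of coins that can be processed in $w$ weighings when the alternator starts in the $f$-, $r$-, and $a$-state. The crucial idea is that neither pure state is the right object to induct on, so I would enlarge the hypothesis to cover \emph{mixed} configurations, in which each candidate coin is individually labeled as being in the $f$- or $r$-state, and let $M(w)$ be the largest total number of coins in a resolvable mixed configuration on $w$ weighings. I would then prove, by simultaneous induction, the two statements $M(w)\le J_{w+2}$ and $N_r(w)\le J_{w+1}$, and finally read the theorem off from these.

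For the inductive step on $M(w)$ I analyze the first weighing, which I take to be balanced (equal numbers of coins on the two pans, the standard convention for the lighter-fake model). In a mixed configuration only an $f$-state coin sitting on a pan can tip the scale, since an $r$-state coin acts real on this weighing; hence the outcome L (respectively R) forces the alternator to be one of the $f$-coins on the left (respectively right) pan, each of which thereupon flips to the $r$-state. Each of these two branches is therefore a pure $r$-state subproblem on $w-1$ weighings and contributes at most $N_r(w-1)$ coins. The outcome E collects everything else---the $f$-coins set aside (still $f$), together with all $r$-coins, those weighed having flipped to $f$ and those aside remaining $r$---which is again a mixed configuration, now on $w-1$ weighings. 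This yields $M(w)\le 2N_r(w-1)+M(w-1)$, and by the induction hypothesis together with the Jacobsthal recursion $J_{w+2}=J_{w+1}+2J_w$ the right-hand side is at most $2J_w+J_{w+1}=J_{w+2}$, as desired.

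The second half of the induction, and then the theorem, follow quickly. For $N_r(w)$ I would use the observation that the \emph{first} weighing in the $r$-state is wasted: the alternator acts real the first time it is weighed, so the outcome is the same no matter which coin is the alternator, no coin is eliminated, and after one weighing we are left with a mixed configuration on $w-1$ weighings; hence $N_r(w)\le M(w-1)\le J_{w+1}$. The base case $w=0$ is immediate, since a single coin must be the alternator and $J_2=J_1=1$. Now $N_f(w)\le M(w)\le J_{w+2}$ because the all-$f$ configuration is a special mixed one, and $N_a(w)\le N_r(w)\le J_{w+1}$ because any strategy that succeeds without knowing the starting state succeeds in particular when the start happens to be $r$; these are exactly the three bounds claimed.

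The step I expect to be the main obstacle is identifying the correct inductive object. The $f$-recursion naturally spills into the $r$-state, and the $r$-state's own first weighing immediately manufactures a genuinely mixed configuration, so the induction closes only after the hypothesis is strengthened to arbitrary $f/r$ mixtures; recognizing this, and that $N_r$ and $M$ must be carried together, is the heart of the matter. A secondary point needing care is the reduction to balanced weighings: one must check that an unbalanced or otherwise non-informative first weighing cannot beat the bound, and here the same mixed-configuration estimate does the work, since it constrains the total regardless of how the pans are loaded.
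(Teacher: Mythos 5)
Your proof is correct, but it formalizes the bound differently from the paper, and in fact proves something slightly stronger. The paper's proof is an encoding and counting argument: fix a strategy, assign to each coin the string over $\{E,L,R\}$ of outcomes that identifies it, observe that L and R can never be adjacent (once the alternator tips a pan it enters the $r$-state, so the next weighing balances whether or not it participates), and count the strings avoiding adjacent L/R by the recursion $s(n)=s(n-1)+2s(n-2)$, which gives $J_{n+2}$; for the $r$- and $a$-states the first letter is forced to be E, dropping the count to $J_{w+1}$. You instead run an induction on strategies themselves, strengthened to mixed $f/r$ configurations, and extract the recursion $M(w)\le M(w-1)+2N_r(w-1)$ together with $N_r(w)\le M(w-1)$ from the three outcomes of the first weighing. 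This is the same Jacobsthal recursion in disguise---your three branches are exactly the paper's classification of strings by their first letter, and your ``wasted'' first weighing in the $r$-state is the paper's forced initial E---but your inductive statement covers arbitrary mixtures of states rather than only the three pure cases, and it handles strategies of uneven depth without the paper's side remark that only full-length strings need to be counted. What the paper's encoding buys is that it never has to formalize subproblems; yours does, and that creates the one point you should patch: after the first weighing, the surviving candidates coexist with coins now known to be real, and those coins remain available as makeweights in later weighings. So $M(w)$ and $N_r(w)$ must be defined as maxima over configurations allowed to contain arbitrarily many known-real auxiliary coins; your case analysis never uses their absence, so the induction closes verbatim under that definition, and the theorem still follows because the pure $f$-, $r$-, and $a$-state problems are special cases. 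Without that adjustment, the step ``the L-branch is a pure $r$-state subproblem of size at most $N_r(w-1)$'' silently assumes that known-real coins cannot help, which is true but not proved.
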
 

\begin{proof}
Suppose there is a strategy. Let us assign a string in the alphabet ELR to each coin. If coin $i$ is the alternator, the string corresponds to the strategy that finds this coin. Strings assigned to different coins must be different. The length of the string is not more than the number of weighings. The new and important observation here is the following: letters L and R cannot follow each other. They must be separated by at least one letter E. Suppose our alternator coin is on one of the pans that is lighter. If the alternator does not participate in the next weighing, then the weighing will balance. If it does participate in the next weighing, the weighing will balance too as the alternator will be in the $r$-state.

Now we need to calculate the maximum number of strings of given length $w$ with this property. Notice that it is theoretically possible to have the strings of shorter length to point to a coin, that is, some coins might be found faster than others. But if a shorter string points to a coin, then all the strings with the same prefix point to the same coin. Thus to find the theoretical maximum we should only count strings of fixed length $w$. 

Let us calculate the number of such strings by induction. Denote this number as $s(n)$. We have 1 string (an empty one) of length zero and 3 strings of length one. The number of strings of length $k$ can be calculated is follows. If the string starts with E, then it can be followed by any such string of length $k-1$. If it starts with L or R, it must be followed by E and then any such string of length $k-2$. Therefore, we have a recursion: $s(n) = s(n-1) + 2s(n-2)$. This is the same recursion as for the Jacobsthal numbers. The initial terms are the Jacobsthal numbers shifted by 2. So $s(n) = J_{n+2}$. What remains to note is that if the alternator is in $r$-state the first letter of the string must be E.
\end{proof}

The lower bound for the number of weighings increases after each Jacobsthal number. In the next section we will see that the bound is precise.

\section{Weighing Strategy}\label{sec:strategy}

Here we want to produce a strategy that can find the alternator in the same number of weighings provided by the bound above. First we deal with an $f$ and $r$ cases and the total of $N$ coins. We define the strategy recursively.

\textbf{Strategy}

\begin{itemize}
\item Suppose the coins are in the $f$-state. Suppose $N$ is more than $J_k$ and not more than $J_{k+1}$. We weigh two piles each containing $J_{k-1}$ coins. If the scale unbalances, then the alternator is in one of the piles on the scale and it will switch its state to $r$. Now we need to process $J_{k-1}$ coins in state $r$. If the scale balances, the alternator is not on the scale, and is still in state $f$. It is among other coins and did not switch its state. Now we need to process $N-2J_{k-1}$ in state $f$.
\item Suppose the coins are in the $r$-state and $N$ is even. We arrange two piles of $N/2$ coins on the scale. After the weighing balances, all the coins will switch their state to $f$. Now we need to process $N$ coins in state $f$. 
\item Suppose the coins are in the $r$-state and $N$ is odd. We put one coin aside and proceed with an even number, $N-1$, of coins as above. At the end of the weighings, if we do not find the alternator, then the put-aside coin is the alternator.
\end{itemize}

Notice that the strategy matches our examples for the total of 2 to 5 coins.

In the following theorem we assume that the number of weighings is at least 1. We use the strategy above to prove the theorem.

\begin{theorem}\label{thm:rf}
For the $f$-state, the number of coins $N$ we can process in $w$ weighings is $J_{w+1} < N \leq J_{w+2}$. For the $r$-state, the number of coins $N$ we can process in $w$ weighings is $J_{w} < N \leq J_{w+1}$.
\end{theorem}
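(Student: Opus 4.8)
The plan is to prove the achievability half of the statement by strong induction on the number of weighings $w$, establishing simultaneously two assertions: (F) the recursive strategy resolves the $f$-state with $N$ coins in at most $w$ weighings whenever $N \le J_{w+2}$, and (R) it resolves the $r$-state with $N$ coins in at most $w$ weighings whenever $N \le J_{w+1}$. The two statements must be proved together, because the strategy's branches cross between states: an $f$-state weighing can spawn an $r$-state subproblem, while an $r$-state weighing always produces an $f$-state subproblem. Once achievability is in hand, the \emph{exact} ranges $J_{w+1} < N \le J_{w+2}$ and $J_{w} < N \le J_{w+1}$ follow by pairing it with the lower-bound theorem of Section~\ref{sec:bound}: that bound shows $w-1$ weighings handle at most $J_{w+1}$ coins in the $f$-state (respectively $J_w$ in the $r$-state), so any $N$ exceeding that forces at least $w$ weighings, pinning the minimum to exactly $w$ on the claimed interval. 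For the base cases I would read off $w=1$ directly from the small examples, namely $f(2)=f(3)=1$ and the trivial single-coin case, which match $J_3=3$ and $J_2=1$.

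For the inductive step in the $f$-state, given $N \le J_{w+2}$ I would choose the index $k$ with $J_k < N \le J_{k+1}$; since $N \le J_{w+2}$ this forces $k \le w+1$. The first weighing compares two piles of $J_{k-1}$ coins, which is legitimate because $N \ge 2J_{k-1}$, a direct consequence of Lemma~\ref{lem:2j}. If the scale tips, the alternator lies in the lighter pan of $J_{k-1}$ coins and has switched to the $r$-state; since $k-1 \le w$ we have $J_{k-1} \le J_w$, so hypothesis (R) for $w-1$ weighings finishes the job. If the scale balances, the alternator is among the remaining $N - 2J_{k-1}$ coins and is still in the $f$-state; Corollary~\ref{cor:Jsplit} gives $N - 2J_{k-1} = m \le J_k \le J_{w+1}$, so hypothesis (F) for $w-1$ weighings applies. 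The identity $J_{w+2} = J_{w+1} + 2J_w$ is precisely what makes these two capacities sum to the full range. The $r$-state step is shorter: for even $N \le J_{w+1}$ the halving weighing balances and converts every coin to the $f$-state, after which (F) for $w-1$ weighings applies since $N \le J_{w+1} = J_{(w-1)+2}$; the odd case sets one coin aside, runs the even procedure on the remaining $N-1$ coins, and declares the set-aside coin to be the alternator if every weighing balanced.

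The step I expect to require the most care is the bookkeeping of this set-aside coin in the odd $r$-state case, and recursively wherever an odd count appears. For the verdict ``if we do not find the alternator, it is the put-aside coin'' to be valid, the all-balanced outcome of the $f$-state subprocedure on the $N-1$ weighed coins must unambiguously signify that none of those coins is the alternator, rather than pointing to some internal never-weighed coin. I would therefore need to check that the recursion can always reserve the all-balanced branch for the single external coin; this reservation costs exactly one unit of $f$-state capacity, and it is consistent precisely because $N \le J_{w+1}$ leaves only $N-1 \le J_{w+1}-1$ coins to test. Confirming that this reservation propagates cleanly through the nested recursion, together with verifying that every pile size invoked ($J_{k-1}$, $N/2$, $(N-1)/2$) is a nonnegative integer not exceeding the available coins, is the main obstacle; everything else reduces to the two Jacobsthal recursions and Corollary~\ref{cor:Jsplit}.
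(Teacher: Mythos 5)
Your proposal follows essentially the same route as the paper: the same recursive strategy, the same coupled induction on the $f$- and $r$-state claims, the same use of Lemma~\ref{lem:2j} and Corollary~\ref{cor:Jsplit} to justify the pile sizes, and the same appeal to the lower bound of Section~\ref{sec:bound} to pin down the exact ranges. All the inequalities in your inductive step check out. But the one point you explicitly defer --- that in the odd $r$-state case the all-balanced outcome can only mean the put-aside coin is the alternator --- is precisely the nontrivial content of the paper's proof, and leaving it as ``the main obstacle'' to be confirmed later leaves the proof incomplete. Moreover, the mechanism you gesture at for closing it (a spare ``unit of $f$-state capacity'' because $N-1 \le J_{w+1}-1$) is not the right one: the capacity bound of Section~\ref{sec:bound} only limits how many coins \emph{any} strategy can distinguish; it says nothing about which outcome string the \emph{particular} constructed strategy assigns to which coin, so spare capacity alone cannot guarantee that the all-E branch is left free for the external coin.

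The paper closes this gap with a parity argument, which is what you would need to supply. In the even-$N$ $r$-state procedure, the first weighing puts all $N$ coins on the scale and switches them to the $f$-state; thereafter, every balanced weighing clears an even number ($2J_{k-1}$) of coins from suspicion, so the pool of unresolved coins stays even throughout. A valid strategy cannot terminate with two or more unresolved coins (they would be indistinguishable), and parity rules out terminating with exactly one; hence the pool terminates at zero, meaning every coin is on the scale at least twice. If the alternator were among the weighed coins, its second appearance would occur in the $f$-state and unbalance the scale --- so ``all weighings balance'' is impossible when the alternator is among the weighed coins, and in the odd case that outcome is automatically reserved for the put-aside coin. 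This is a structural property of the constructed strategy, verified through the recursion, not a consequence of counting capacities.
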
 

\begin{proof}
We proceed by induction on $k$, the index of the Jacobsthal numbers. Assuming the theorem holds for the number of coins up to $J_k$, we will show that it holds for the number of coins up to $J_{k+1}$. 

For the base of induction we use the small examples we have already covered. For the $f$-state, in one weighing we can process $N$ coins, where $J_{2}=1 < N \leq J_{3} =3$. In two weighings we can process $N$ coins, where $J_{3}=3 < N \leq J_{4} =5$. For the $r$-state, in one weighing we can process $N$ coins, where $J_{1}=1 < N \leq J_{2} =1$; that is, nothing can be done in one weighing. In two weighings we can process $N$ coins, where $J_{2}=1 < N \leq J_{3} =3$. In three weighings we can process $N$ coins, where $J_{3}=3 < N \leq J_{4} =3$. For the number of coins up to $5 = J_4$ our theorem holds. 

Suppose the theorem is true, and we can find a strategy for the number of coins up to $J_k$, where $k > 4$ in at least $k-2$ weighings for the $f$-state and at least $k-1$ weighings for the $r$-state. Consider the number of coins $N$ that is more than $J_k$ and not more than $J_{k+1}$. For our induction step we need to find the strategy for the $f$-state in $k-1$ weighings and for the $r$-state in $k$ weighings.

First consider the $f$ state. By Corollary~\ref{cor:Jsplit}, we have $N=2J_{k-1} + m$, where $0 \leq m \leq J_k$. Our strategy is to weigh two piles each containing $J_{k-1}$ coins and have $m$ coins left outside the scale. If the scale unbalances, then the alternator is in one of the piles on the scale. It switches state to $r$. By our induction hypothesis $r(J_{k-1}) = k-2$. If the scale balances, then the alternator is in the leftover pile and has state $f$. By our induction hypothesis $f(m) \leq f(J_k) = k-2$. Given that we used one weighing, the total number of weighings that we need is $k-1$.

Next consider the $r$-state. If $N$ is even, we weigh all the coins switching their state to $f$. By the induction hypothesis, we just showed how to find the alternator among these coins in $k-1$ more weighings. Therefore, we can find the alternator in $k$ weighings total.

Suppose $N$ is odd. We put aside one coin and proceed with the rest of the coins as above. We need to show that we will not end up with a contradiction. That is, if all the weighings balance, the only possibility for the alternator is to be the coin that is put aside. Note that if one of the weighings is unbalanced, then the coin that we put aside cannot be the alternator. 

Suppose $N$ is even and all the weighings balance. In the first weighing we have all the coins on the scale and they turn their state to $f$. If the second weighing balances, that means that all the coins on the scale are real, so they do not participate in any further weighings. The number of leftover coins is even and they are in state $f$, so we proceed with them. After each successive balanced weighing, we remove an even number of coins from consideration and we are left with an even number of coins. At the end, the number of coins that are left must be zero. Indeed, if we are left with more than 1 coin that hasn't been on the scale for the second time, we cannot differentiate which one is the alternator. That means the alternator must be on the scale at least twice. Therefore, there is at least one unbalanced weighing.

Now let us go back to odd $N$. If all the weighings balance, the alternator must be the put-aside coin.
\end{proof}

We now want to consider the $a$ case, when the state of the alternator is not known. Here is the strategy:

\textbf{Strategy}

\begin{itemize}
\item If $N$ is even we arrange two piles of $N/2$ coins on the scale. If the weighing balances, all the coins on the scale will be in state $f$. Now we need to process $N$ coins in state $f$, so we follow the strategy above. If the weighing unbalances, the alternator is on the lighter pan, and in state $r$. We use the strategy above to process these coins.
\item If $N$ is odd, we put one coin aside and proceed with the rest of the coins as above. At the end of the weighings, if we do not find the alternator, then the put-aside coin is the alternator.
\end{itemize}

In the following theorem we assume that the number of weighings is at least 1. We use the previous strategy to prove the theorem.

\begin{theorem}
For the $a$-state, the number of coins $N$ we can process in $w$ weighings is $J_{w} < N \leq J_{w+1}$.
\end{theorem}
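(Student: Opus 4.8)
The plan is to combine the information-theoretic bound of Section~\ref{sec:bound} with the achievability of the displayed $a$-state strategy, the latter reduced to Theorem~\ref{thm:rf} after a single weighing. For the upper bound, the theorem of Section~\ref{sec:bound} already gives that in the $a$-state at most $J_{w+1}$ coins can be processed in $w$ weighings. For the lower bound $N > J_w$, the same theorem applied to $w-1$ weighings shows that at most $J_{(w-1)+1} = J_w$ coins can be handled, so $N > J_w$ forces at least $w$ weighings. Thus the only remaining task is to show that the strategy succeeds in $w$ weighings whenever $N \le J_{w+1}$, which I would prove by induction on the Jacobsthal index (equivalently on $w$), using the small cases $a(2)=a(3)=2$ and $a(4)=a(5)=3$ from Section~\ref{sec:alternator} as the base.

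For the inductive step take $J_w < N \le J_{w+1}$ and first suppose $N$ is even. The first weighing puts all $N$ coins on the scale in two piles of $N/2$. If it balances, the alternator was in the $r$-state and all $N$ coins are now in the $f$-state; by Theorem~\ref{thm:rf} they are processable in the remaining $w-1$ weighings as long as $N \le J_{(w-1)+2} = J_{w+1}$, which holds. If it unbalances, the alternator lies on the lighter pan in the $r$-state, and by Theorem~\ref{thm:rf} those $N/2$ coins are processable in $w-1$ weighings as long as $N/2 \le J_{(w-1)+1} = J_w$, i.e.\ $N \le 2J_w$. Here I would invoke Lemma~\ref{lem:2j} in the form $2J_w = J_{w+1} - (-1)^w$ together with the fact, immediate from the closed form, that $J_{w+1}$ is odd: since $N$ is even, $N \le J_{w+1} - 1 \le 2J_w$, so both branches close and $w$ weighings suffice.

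For odd $N$, which is the case containing $N = J_{w+1}$, I would set one coin aside and run the even strategy on the remaining $N-1$ coins; this is legitimate because $N-1$ is even and $N-1 \le J_{w+1}-1 \le 2J_w$, so by the even case it finishes in $w$ weighings. If the alternator is among these coins it reveals itself; otherwise it is the put-aside coin. The step I expect to be the main obstacle is verifying that ``all weighings balance'' genuinely forces the alternator to be the put-aside coin rather than leaving an ambiguity among the $N-1$ coins. I would settle this exactly as in the odd case of Theorem~\ref{thm:rf}: a balanced first weighing puts all $N-1$ on-scale coins into the $f$-state, and thereafter each balanced weighing certifies the coins just weighed as real and removes an even number of them, so the pile still in play keeps even cardinality; starting from the even number $N-1$ this process can never isolate a single undetermined coin, so an alternator hiding among the $N-1$ coins would have to appear on the scale at least twice and thereby produce an unbalanced weighing. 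Combining this achievability with the necessity argument above yields the exact range $J_w < N \le J_{w+1}$.
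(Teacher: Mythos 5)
Your proposal is correct and follows essentially the same route as the paper: a first weighing of all (or all but one) coins, reduction of both branches to Theorem~\ref{thm:rf} using Lemma~\ref{lem:2j} to get $\lfloor N/2\rfloor \le J_w$ (your parity-of-$J_{w+1}$ argument is just a repackaging of the paper's floor computation), and the even-cardinality argument to certify the put-aside coin. The only cosmetic differences are that you frame the achievability as an induction whose hypothesis is never actually invoked (everything already follows from Theorem~\ref{thm:rf}), and that you spell out the lower-bound citation and the put-aside argument, which the paper leaves as ``by similar reasoning as above.''
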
 

\begin{proof}
In the first weighing we weigh all or all but one coin. If the weighing unbalances, then the alternator is one of the $\lfloor N/2 \rfloor$ coins on the scale. By Lemma~\ref{lem:2j}, we see that $N/2 \leq J_w - (-1)^{w+1}/2$. Therefore, $\lfloor N/2 \rfloor  \leq J_w$. By Theorem~\ref{thm:rf} we can process not more than $J_w$ coins in $w-2$ weighings. That means we can process all the coins in $w-1$ weighings.

If the weighing balances, we see that all the coins that were on the scale must be in the $f$ state, and we can process them in $w-1$ weighings. The total number of weighings is $w$.

By similar reasoning as above, the coin that was put aside is the alternator if all the weighings balance.
\end{proof}

\begin{corollary}
$a(N) = r(N) = f(N) +1$.
\end{corollary}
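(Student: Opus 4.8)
The plan is to read off the corollary directly from the three preceding theorems, which together pin down each sequence by the same Jacobsthal thresholds. Recall that Theorem~\ref{thm:rf} characterizes $f(N)$ and $r(N)$, and the final theorem characterizes $a(N)$, each in the form ``the number of coins processable in $w$ weighings is exactly the interval $J_{i} < N \leq J_{i+1}$'' for an appropriate index $i$. The whole content of the corollary is to translate these ``number of coins per number of weighings'' statements into ``number of weighings per number of coins'' statements and then compare them. First I would invert each characterization: since $f(N) = w$ precisely when $J_{w+1} < N \leq J_{w+2}$, the value $f(N)$ is the unique $w$ with $N$ in that interval; similarly $r(N) = w$ exactly when $J_{w} < N \leq J_{w+1}$, and $a(N) = w$ exactly when $J_{w} < N \leq J_{w+1}$.

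The comparison $a(N) = r(N)$ is then immediate, since the final theorem and Theorem~\ref{thm:rf} give $a$ and $r$ the identical interval $J_{w} < N \leq J_{w+1}$ for each $w$; hence for every $N$ the defining weighing-count coincides. For the relation $a(N) = f(N) + 1$, I would fix an arbitrary $N$ and let $w = a(N)$, so that $J_{w} < N \leq J_{w+1}$. I then need to show $f(N) = w - 1$, i.e.\ that $N$ lies in the $f$-interval for $w-1$ weighings, which by Theorem~\ref{thm:rf} is $J_{w} < N \leq J_{w+1}$. But this is exactly the interval we already have, so substituting $w-1$ into the $f$-characterization (which uses indices $J_{(w-1)+1}$ and $J_{(w-1)+2}$, i.e.\ $J_{w}$ and $J_{w+1}$) reproduces the same bounds, giving $f(N) = w - 1 = a(N) - 1$.

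The only genuine content is the bookkeeping of the Jacobsthal index shifts: the $f$-characterization is stated with indices $w+1, w+2$, whereas the $r$- and $a$-characterizations use $w, w+1$, so one must verify that decrementing the weighing count by one in the $f$-formula exactly recovers the $r$/$a$-formula. I would handle edge cases by noting the theorems assume at least one weighing, and by checking the small-$N$ base cases already tabulated in the excerpt (for instance $f(2)=f(3)=1$ while $a(2)=a(3)=r(2)=r(3)=2$) to confirm the identity holds at the boundary. I expect the main---and only---obstacle to be this index alignment; once it is checked, the corollary follows with no further computation, since all three sequences are now expressed through the same Jacobsthal intervals.
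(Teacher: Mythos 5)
Your proposal is correct and matches the paper's (implicit) argument exactly: the paper states this corollary without proof precisely because it follows from inverting the Jacobsthal-interval characterizations in Theorem~\ref{thm:rf} and the $a$-state theorem, which is the index-shifting bookkeeping you carry out. Your observation that the $f$-interval for $w-1$ weighings coincides with the $r$/$a$-interval for $w$ weighings is the whole content, and you handle it correctly.
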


\section{Acknowledgements}
We would like to thank the PRIMES STEP program for the opportunity to do this research. In addition, we are grateful to PRIMES STEP Director, Dr.~Slava Gerovitch, for his help and support.

\end{document}